\numberwithin{equation}{section}
\newtheorem{theorem}{Theorem}[section]
\newtheorem{proposition}[theorem]{Proposition}
\theoremstyle{remark}
\definecolor{darkblue}{rgb}{0,0,0.7}
\renewcommand{\div}{\mathop{\mathrm{div}}}
\newcommand{\donothing}[1]{{}}
\newcommand{\xRightarrow}[2][]{\ext@arrow 0359\Rightarrowfill@{#1}{#2}}
\let\OLDthebibliography\thebibliography
\renewcommand\thebibliography[1]{
	\OLDthebibliography{#1}
	\setlength{\parskip}{1pt}
	\setlength{\itemsep}{1pt plus 0.3ex}
}
\title[An inverse problem for the active scalar equations]{A Calder\'on type inverse problem for the active scalar equations with fractional dissipation}
\author{Li Li and Weinan Wang}
\date{}
\begin{document}
	
	\begin{abstract}
		
		In this paper, we are interested in an inverse problem for the active scalar equations with fractional dissipation on the torus. {We perform a second order linearization to relate our model to the linear fractional diffusion equation. Our approach to solving the inverse problem relies on nonlocal phenomena such as the unique continuation property of the fractional Laplacian and its associated Runge approximation property. A remarkable feature of our model is that the divergence-free structure in the nonlinear term plays an important role in both forward and
			inverse problems.}
	\end{abstract}
	
	\maketitle
	\section{Introduction}\label{intro}
	Active scalar equations describe a number of physical phenomena that arise in fluid dynamics. Roughly speaking, they are partial differential equations where the evolution in time of a scalar quantity is governed by the motion of the fluid in which the velocity itself varies with this scalar quantity \cite{FSW}. Typical examples include the Navier-Stokes equations, the Euler equations, the Boussinesq equations, and the porous media equation (see for instance, \cite{BE, NSE}).
	
	In this paper, we are interested in active scalar equations with fractional dissipation and a divergence-free drift velocity. This kind of equations appear in a number of physical regimes, such as stratification, rapid rotation, hydrostatics and geostrophic balance.
	We will focus on the torus $\mathbb{T}^2$ and the fractional power $\alpha\in
	(\frac{1}{2}, 1)$. More precisely, we consider the following initial value problem for the dissipative
	active scalar equation
	\begin{equation}\label{ASE}
		\left\{
		\begin{aligned}
			\partial_t \theta + u\cdot \nabla \theta +(-\Delta)^\alpha \theta &= f,\quad \,\,\, (x,t)\in \mathbb{T}^2\times (0, T),\\
			\theta(0)&= 0,\quad \,\,\,x\in \mathbb{T}^2.\\
		\end{aligned}
		\right.
	\end{equation}
	Here the vector-valued $u:= \mathcal{R}\theta$ has the form
	\begin{equation}\label{u1u2}
		(u_1, u_2)= (-\partial_2(\mathcal{K}\theta), \partial_1(\mathcal{K}\theta)),
	\end{equation}
	where $\mathcal{K}$ is a multiplier of order $-1$ (e.g. the Riesz potential $(-\Delta)^{-\frac{1}{2}}$). 
	
	{More precisely, we require that
		\begin{equation}\label{Ass1}
			K\in L^1(\mathbb{T}^2)\cap C^\infty(\mathbb{T}^2\setminus\{0\}).
		\end{equation}
		Its Fourier coefficients satisfy
		\begin{equation}\label{Ass2}
			c |k|^{-1}\leq \hat{K}(k)\leq C |k|^{-1},\qquad k\neq 0
		\end{equation}
		for some positive constants $c, C$.  Moreover,
		\begin{equation}\label{Ass3}
			\mathcal{R}:\, L^p(\mathbb{T}^2)\to L^p(\mathbb{T}^2)
		\end{equation}
		is bounded for $p\in (1, \infty)$. Here $K$ denotes the kernel corresponding to the multiplier $\mathcal{K}$, i.e.
		$$\mathcal{K}\theta(x)= K*\theta(x)= \int_{\mathbb{T}^2}K(x-y)\theta(y)\,\mathrm{d}y.$$}
	
	This model includes the surface quasi-geostrophic (SQG) equation (see \cite{SQG}), which corresponds to the vorticity equation of the 2D Euler equations in fluid dynamics.
	
	Given a nonempty open set $W\subset \mathbb{T}^2$ which can be arbitrarily small, we are interested in the determination of information on the operator $\mathcal{R}$ in $W^e:= \mathbb{T}^2\setminus\Bar{W}$, based on the observation on $\theta$ as well as the divergence-free drift velocity $u$ in $W$. 
	More precisely, we consider the following source to solution map
	\begin{equation}\label{stos}
		L_\mathcal{R}: f\,{\mapsto}\,(\theta|_{W\times (0, T)}, u|_{W\times (0, T)}),\qquad f\in C^\infty_c(W\times (0, T)
		).
	\end{equation}
	
	Our main result is the following theorem.
	\begin{theorem}\label{main}
		Let $T> 0$ which can be arbitrarily small. Suppose that $\mathcal{R}_1$ and $\mathcal{R}_2$ satisfy the assumptions (\ref{Ass1}-\ref{Ass3}).
		If $L_{\mathcal{R}_1}= L_{\mathcal{R}_2}$, then 
		$$\mathcal{R}_1g|_{W^e}= \mathcal{R}_2g|_{W^e},\qquad g\in 
		C^\infty_c(W^e).$$
	\end{theorem}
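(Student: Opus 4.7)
The plan is to perform a second-order linearization of \eqref{ASE} at $\theta\equiv 0$, reducing the inverse problem to a linear one governed by the fractional diffusion equation, and then to exploit the divergence-free structure of $\mathcal{R}$ together with the Runge approximation property to recover $A:=\mathcal{R}_1-\mathcal{R}_2$ on $W^e$. Introduce a two-parameter family of sources $f_\epsilon = \epsilon_1 f_1 + \epsilon_2 f_2$ with $f_1, f_2 \in C_c^\infty(W\times(0,T))$ and write $\theta_\epsilon$ for the corresponding solution of \eqref{ASE}. Formal differentiation at $\epsilon=0$ produces first-order terms $V_i:=\partial_{\epsilon_i}\theta_\epsilon|_{\epsilon=0}$ satisfying the linear fractional diffusion equation
\[ \partial_t V_i + (-\Delta)^\alpha V_i = f_i,\qquad V_i(0)=0, \]
(which does not involve $\mathcal{R}$), and a mixed second-order term $Z:=\partial_{\epsilon_1}\partial_{\epsilon_2}\theta_\epsilon|_{\epsilon=0}$ satisfying
\[ \partial_t Z + (-\Delta)^\alpha Z = -\mathcal{R}V_1\cdot\nabla V_2 - \mathcal{R}V_2\cdot\nabla V_1,\qquad Z(0)=0. \]
Setting $\zeta := Z_1 - Z_2$, the hypothesis $L_{\mathcal{R}_1}=L_{\mathcal{R}_2}$ forces $\zeta\equiv 0$ on $W\times(0,T)$, and subtracting the two second-order equations produces
\[ \partial_t \zeta + (-\Delta)^\alpha \zeta = -AV_1\cdot\nabla V_2 - AV_2\cdot\nabla V_1,\qquad \zeta(0)=0. \]

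Next I would derive an Alessandrini-type integral identity. Pair the equation for $\zeta$ with a test function $\phi$ solving the backward adjoint problem $-\partial_t\phi + (-\Delta)^\alpha\phi = g$ on $\mathbb{T}^2\times(0,T)$ with terminal condition $\phi(T)=0$ for $g\in C_c^\infty(W\times(0,T))$. Space-time integration by parts, using $\zeta(0)=0$, $\phi(T)=0$, and the vanishing of $\zeta$ on the support of $g$, yields
\[ \int_0^T\!\!\int_{\mathbb{T}^2}\phi\,\bigl(AV_1\cdot\nabla V_2 + AV_2\cdot\nabla V_1\bigr)\,dx\,dt = 0. \]
At this point the divergence-free structure of $\mathcal{R}$, inherited by $A$, plays a crucial role: rewriting $AV_i\cdot\nabla V_j = \nabla\cdot(V_j\,AV_i)$ and integrating by parts once more on the boundaryless torus reduces the identity to the bilinear form
\[ \int_0^T\!\!\int_{\mathbb{T}^2}\nabla\phi\cdot\bigl(V_2\,AV_1 + V_1\,AV_2\bigr)\,dx\,dt = 0. \]

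Finally I would invoke the Runge approximation property for the forward and backward fractional diffusion equation, which follows from the unique continuation property of $(-\Delta)^\alpha$ via a Hahn--Banach duality argument. This density allows me to choose $V_1, V_2, \phi$ whose restrictions to $W^e\times(0,T)$ approximate arbitrary prescribed test functions. The first-order consequence of $L_{\mathcal{R}_1}=L_{\mathcal{R}_2}$, namely $AV|_{W\times(0,T)}=0$ for every first-order solution $V$ (read off from the $u$-component of the source-to-solution map at first order), shows that $AV_i$ vanishes on $W$ and therefore effectively localizes the integrand to $W^e$. Combined with the specific structure $A=(-\partial_2 B,\partial_1 B)$ where $B := \mathcal{K}_1-\mathcal{K}_2$ is a translation-invariant convolution, the limit identity together with the arbitrariness of the Runge approximants should force $Ag|_{W^e}=0$ for every $g\in C_c^\infty(W^e)$, which is exactly the conclusion.

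The main obstacle is this final extraction step. Because $A$ is nonlocal, Runge approximation on $W^e$ controls $V_i|_{W^e}$ but not $V_i|_W$, so $AV_i|_{W^e}$ still depends in principle on $V_i$ over the entire torus. The two ingredients meant to overcome this are the divergence-free structure, which yielded the clean bilinear form above, and the first-order identity $AV|_W=0$, which cuts down the support of $AV_i$. Additional technical points include verifying enough regularity of $V_i$ and $Z$ to justify the formal differentiation of the source-to-solution map, and establishing the parabolic Runge approximation on the torus under the assumptions \eqref{Ass1}--\eqref{Ass3}; the condition $\alpha\in(\tfrac{1}{2},1)$ and the smoothing effect of $(-\Delta)^\alpha$ should supply the necessary regularity.
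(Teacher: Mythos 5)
Your overall strategy---second-order linearization, exploitation of the divergence-free structure of $\mathcal{R}$, and a combination of unique continuation with Runge approximation---is the same as the paper's, but you miss the one device that makes the final extraction work, and you yourself flag the resulting gap. The paper's first move is to use \emph{both} components of the measurement \eqref{stos} at the nonlinear level: since $\theta^{(1)}=\theta^{(2)}$ and $\mathcal{R}_1\theta^{(1)}=\mathcal{R}_2\theta^{(2)}$ in $W\times(0,T)$, subtracting the two copies of \eqref{ASE} gives $(-\Delta)^\alpha(\theta^{(1)}-\theta^{(2)})=0$ in $W\times(0,T)$, and Proposition \ref{UCP} then upgrades this to $\theta^{(1)}=\theta^{(2)}$ on \emph{all} of $\mathbb{T}^2\times(0,T)$ before any linearization is performed. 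Feeding this into the second-order linearization yields the identity
\[
(\mathcal{R}_1-\mathcal{R}_2)w_1\cdot\nabla w_2+(\mathcal{R}_1-\mathcal{R}_2)w_2\cdot\nabla w_1=0
\]
\emph{pointwise on the whole torus}, rather than your Alessandrini-type pairing against backward solutions with source in $W$. This is strictly more information: in your version the test function $\phi$ is constrained to be an adjoint solution, whereas the paper may pair its pointwise identity with an arbitrary time-independent $\varphi\in C^\infty_c(W^e)$.

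That extra freedom is precisely what closes the extraction step you leave open. After the Runge substitution, the paper takes $\phi_1\in C^\infty_c(W_1)$, $\phi_2\in C^\infty_c(W_2)$ with $W_1,W_2\subset W^e$ \emph{disjoint} and chooses $\varphi$ equal to a coordinate function $x_1$ or $x_2$ on $\mathrm{supp}\,\phi_1$; the disjointness keeps $x-y$ away from the singularity of the kernels and the coordinate choice linearizes the identity into
\[
\int_{W_1}\int_{W_2}\partial_j(K_1-K_2)(x-y)\phi_1(x)\phi_2(y)\,\mathrm{d}y\,\mathrm{d}x=0,
\]
whence $\nabla(K_1-K_2)=0$ on $\{x-y:\ x,y\in W^e,\ x\neq y\}$ and the theorem follows. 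Your proposal stops before this point: you correctly observe that $AV_i|_{W^e}$ depends on $V_i|_{W}$, which the Runge property does not control, and you offer the first-order identity $AV|_W=0$ together with the divergence structure as ingredients that ``should'' resolve this, but you do not carry out the passage to the limit or the identification of the kernel. As written, the argument does not reach the conclusion $\mathcal{R}_1g|_{W^e}=\mathcal{R}_2g|_{W^e}$; to complete it you need either the paper's globally valid pointwise identity (obtained by applying unique continuation to the nonlinear solutions before linearizing) or some substitute for the free choice of $\varphi$ and the disjoint-support trick that isolates $\nabla(K_1-K_2)$.
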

	{We remark that we are unable to extend the determination to the whole domain $\mathbb{T}^2$ mainly because we can obtain the density of solutions only in the exterior of $W$ (see Proposition \ref{RAP} in Section 5 later).}
	
	\subsection{Connection with earlier literature}
	The mathematical study of Calder\'on type inverse problems for space-fractional operators dates back to \cite{ghosh2020calderon}, where the authors considered the exterior Dirichlet problem for the fractional Schr\"odinger operator $(-\Delta)^s+ q$. They exploited the nonlocal features of the fractional operator to uniquely determine the potential $q$ inside the domain from partial knowledge of the Dirichlet-to-Neumann map which is an exterior measurement.
	We refer readers to \cite{ghosh2020uniqueness, covi2022higher, baers2024instability} and the references therein for extensions and variants of the fractional Calder\'on problem.
	
	It is well-known that the multiple-fold linearization can be useful in solving inverse problems for nonlinear equations. This technique was first introduced in \cite{kurylev2018inverse} and applied to local equations (see for instance, \cite{feizmohammadi2020inverse, krupchyk2020remark, uhlmann2021inverse}). Later the multiple-fold linearization was applied to fractional equations including
	the fractional semilinear elliptic equation (see \cite{lai2022inverse}), the nonlinear fractional magnetic equation (see \cite{lai2023inverse}), the fractionally damped wave equation (see \cite{li2023inverse}) and the nonlinear fractional elastic equation (see \cite{li2023elas}). 
	
	Our study in this paper can be viewed as a continuation of this thread. To the best knowledge of the authors, there is no existing literature on the study of the Calder\'on problem for the type of active scalar equations with fractional dissipation. One remarkable feature of our model is that the divergence-free structure in the
	nonlinear term brings convenience to both forward and inverse problems.
	Another remarkable feature is that we aim at determining the information on the nonlocal operator $\mathcal{R}$, rather than the usual local variable coefficients in the fractional Calder\'on problem.
	
	\subsection{Organization}\label{org}
	The rest of this paper is organized in the following way. In Section \ref{pre}, we will provide preliminary knowledge. In Section \ref{well}, we will state the well-posedness result for equation \eqref{ASE}, preparing for the linearization argument later. In Section \ref{lin}, we will rigorously perform the first and second order linearizations. In Section \ref{runge}, we will derive a Runge approximation property based on the the unique continuation property of the fractional Laplacian. In Section \ref{proof}, we will prove the main theorem. We will sketch the proof of the well-posedness of the forward problem in Section \ref{appen}.
	
	\section{Preliminaries}\label{pre}
	Throughout this paper, we focus on the space dimension $n=2$. We will write ``$n$" rather than ``$2$" when the definitions and propositions hold for general $n$. We use $\mathbb{T}^n:= \mathbb{R}^n/\mathbb{Z}^n$ to denote the $n$-torus. We also identify $\mathbb{T}^n$ with the cube
	$[-\frac{1}{2}, \frac{1}{2})^n$ for the purpose of integration.
	
	\subsection{Sobolev spaces}
	The (formal) Fourier series of $u$ is denoted by
	$$\sum_{k\in\mathbb{Z}^n}\hat{u}(k)e^{2\pi ix\cdot k},$$
	where $\hat{u}(k)$ is the Fourier coefficient. We have the (inhomogeneous) Sobolev space
	$$H^r(\mathbb{T}^n) =  \{u \in \mathcal{D}'(\mathbb{T}^n): \|u\|^2_{H^r}:=\sum_{k\in\mathbb{Z}^n} (1+|k|^{2})^r |\hat{u}(k)|^2 < +\infty\},$$
	and the homogeneous Sobolev space
	$$\dot{H}^r(\mathbb{T}^n) =  \{u \in \mathcal{D}'(\mathbb{T}^n): \|u\|^2_{\dot{H}^r}:=\sum_{k\in\mathbb{Z}^n, k\neq 0} |k|^{2r} |\hat{u}(k)|^2 < +\infty\}.$$
	For $u$ satisfying $\hat{u}(0)= 0$, the two norms are equivalent, and we do not distinguish
	$\dot{H}^r$ from $H^r$ in this case.
	
	For $r\geq 0$, the fractional Laplacian is defined by
	$$(-\Delta)^r u= \sum_{k\in\mathbb{Z}^n} |2\pi k|^{2r} \hat{u}(k)e^{2\pi ix\cdot k}.$$
	
	\subsection{Riesz transforms}
	To ensure the well-posedness of the forward problem and manipulate the integral identity in the inverse problem part, we need to impose the assumptions (\ref{Ass1}-\ref{Ass3}) on the operator 
	$\mathcal{R}$. A typical example of our model is the surface quasi-geostrophic (SQG) equation. In this case, the components of $\mathcal{R}$ are the Riesz transforms.
	
	It is well-known (see \cite{stein1970singular}) that in $\mathbb{R}^n$, the Riesz transforms can be defined by the singular integrals
	$$R_jf(x):= c_n\,\mathrm{p.v.}\int_{\mathbb{R}^n}
	\frac{(x_j- y_j)f(y)}{|x-y|^{n+1}}\,\mathrm{d}y,$$
	and they are bounded on $L^p(\mathbb{R}^n)$
	($1< p< \infty$).
	We can also interpret the Riesz transforms as the composition of the first order differential operators and the Riesz potential, i.e. $R_j= \partial_j (-\Delta)^{-\frac{1}{2}}$, where
	$$(-\Delta)^{-\frac{1}{2}}f(x):=
	c_n\int_{\mathbb{R}^n}
	\frac{f(y)}{|x-y|^{n-1}}\,\mathrm{d}y.$$
	
	In our torus case, the components of $\mathcal{R}$ are the periodic Riesz transforms.
	More precisely, we can write
	$\mathcal{R}\theta=
	(-R_2\theta, R_1\theta)$,
	where
	$$R_j\theta:= \sum_{k\neq 0,\, k\in\mathbb{Z}^2}-\frac{ik_j}{|k|}\hat{\theta}(k)e^{2\pi ix\cdot k},\qquad j=1, 2.$$
	It is well-known (see Theorem 2.17 in Chapter 7 in \cite{SteinIntro}) that 
	$\mathcal{R}$ is bounded on $L^p(\mathbb{T}^2)$
	($1< p< \infty$), and the corresponding $\mathcal{K}$ has the kernel $K\in L^1(\mathbb{T}^2)\cap C^\infty(\mathbb{T}^2\setminus\{0\})$ with the Fourier coefficients $\hat{K}(0)= 0$ and $\hat{K}(k)= |k|^{-1}$
	for $k\neq 0$. Moreover, $K$ has the same singularity as $|x|^{1-n}$ at the origin.
	
	\section{Well-posedness}\label{well}
	The proposition below essentially
	follows from Theorem 3.5 and Theorem 3.7 in \cite{Resnick1996}, which are originally stated for the SQG equation. The same proof works for our more general model, since the argument does not rely the explicit expression of $\hat{K}(k)$, but relies on the $L^p$-boundedness of $\mathcal{R}$ and the fact that the multiplier $\mathcal{K}$ commutes with differential operators and the fractional Laplacian. We will sketch the proof of the following proposition in Appendix (see Section \ref{appen}) to avoid redundancy.
	
	\begin{proposition}\label{p1}
		Let $\frac{1}{2}< \alpha< 1$ and $s> 0$. Suppose $0< \frac{1}{q}< \alpha- \frac{1}{2}$. Suppose 
		$$f\in L^1(0, T; L^q(\mathbb{T}^2))\cap  L^2(0, T; H^{s-\alpha}(\mathbb{T}^2))$$ and $\int_{\mathbb{T}^2}f(t, x)\,\mathrm{d}x= 0$ for $t\leq T$. Then
		(\ref{ASE}) has a unique solution $\theta$ satisfying
		$\int_{\mathbb{T}^2}\theta(t, x)\,\mathrm{d}x= 0$ for $t\leq T$ and
		$$\theta\in L^\infty(0, T; L^q(\mathbb{T}^2))\cap  L^\infty(0, T; H^{s}(\mathbb{T}^2))\cap  L^2(0, T; H^{s+\alpha}(\mathbb{T}^2)).$$
		Moreover, we have
		$$\|\theta(t)\|_{L^q}\leq Q_t(f):= \int^t_0\|f(\tau)\|_{L^q}\,\mathrm{d}\tau,$$
		\begin{equation}\label{Resnick}
			\|\theta(t)\|^2_{H^s}+ \int^t_0 \|\theta(\tau)\|^2_{H^{s+\alpha}}\,\mathrm{d}\tau
			\leq C_2\int^t_0 \|f(\tau)\|^2_{H^{s-a}}e^{C_1(t-\tau)Q^{\frac{\alpha}{\alpha-1/2-1/q}}_t(f)}\,\mathrm{d}\tau
		\end{equation}
		for $t\leq T$.
	\end{proposition}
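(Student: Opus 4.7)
The plan is to run the classical Galerkin-plus-a-priori-estimate scheme (as in \cite{Resnick1996} for SQG), taking advantage of two structural features of the model: the divergence-free identity $\div u=0$, which is built into the form $u=(-\partial_2\mathcal{K}\theta,\partial_1\mathcal{K}\theta)$, and the $L^p$-boundedness of $\mathcal{R}$ together with the fact that $\mathcal{K}$ commutes with $\Lambda^s:=(-\Delta)^{s/2}$. First I would introduce spectrally truncated approximations $\theta_N$ via Galerkin projection onto the first $N$ Fourier modes; these solve ODEs on a finite-dimensional subspace and so exist globally. Then I would establish two $N$-uniform estimates and pass to the limit by weak/strong compactness.

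For the $L^q$ bound, testing the equation against $|\theta|^{q-2}\theta$ makes the transport term vanish,
$$\int u\cdot\nabla\theta\,|\theta|^{q-2}\theta\,dx=\tfrac{1}{q}\int u\cdot\nabla(|\theta|^q)\,dx=-\tfrac{1}{q}\int(\div u)|\theta|^q\,dx=0,$$
while the dissipation term is nonnegative by the C\'ordoba--C\'ordoba pointwise inequality for $(-\Delta)^\alpha$. This yields $\tfrac{d}{dt}\|\theta\|_{L^q}\leq\|f\|_{L^q}$ and hence $\|\theta(t)\|_{L^q}\leq Q_t(f)$. For the $H^s$ bound I would apply $\Lambda^s$ to the equation and pair with $\Lambda^s\theta$; the dissipation produces $\|\theta\|_{H^{s+\alpha}}^2$ on the left, and the decomposition $\Lambda^s(u\cdot\nabla\theta)=u\cdot\nabla(\Lambda^s\theta)+[\Lambda^s,u\cdot\nabla]\theta$ again kills the transport piece by $\div u=0$. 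The remaining commutator is controlled by a Kenig--Ponce--Vega / Kato--Ponce type estimate, with $\nabla u$ re-expressed in terms of $\theta$ through the $L^p$-boundedness of $\mathcal{R}$. Using $\|\theta\|_{L^q}\leq Q_t(f)$ from the previous step together with fractional Sobolev interpolation, one absorbs the principal contribution into the dissipation and is left with a lower-order term bounded by $C\|\theta\|_{H^s}^2$ times a power of $Q_t(f)$; the constraint $0<1/q<\alpha-1/2$ is exactly the threshold that permits this absorption and pins down the exponent $\alpha/(\alpha-1/2-1/q)$ appearing in (\ref{Resnick}). A Gr\"onwall argument then closes the estimate.

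Passing to the limit is routine: the uniform bounds give weak compactness in $L^2(0,T;H^{s+\alpha})$ and, using the equation to bound $\partial_t\theta_N$ in a weaker space, strong compactness in intermediate spaces via Aubin--Lions, which is enough to identify the nonlinear limit; the mean-free condition on $f$ propagates to $\theta$ because $\int_{\mathbb{T}^2}u\cdot\nabla\theta\,dx=0$ and $(-\Delta)^\alpha$ preserves zero mean. Uniqueness follows by subtracting two solutions, testing the difference equation against $\theta_1-\theta_2$, and exploiting $\div u_j=0$ once more together with the $L^\infty_tL^q_x$ bound. The principal technical obstacle is the $H^s$ commutator step: the interplay among the dissipation order $\alpha$, the $\mathcal{R}$-boundedness assumption (which replaces any explicit formula for $\hat{K}$), and the interpolation exponent dictated by $1/q<\alpha-1/2$ must be balanced precisely to produce the form of the bound stated in the proposition.
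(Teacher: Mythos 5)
Your overall architecture (approximation scheme, $L^q$ bound, $H^s$ energy estimate with absorption into the dissipation, compactness, uniqueness by energy methods) matches the paper's, which follows Resnick's treatment of SQG, and your $H^s$ step via a commutator decomposition is an acceptable substitute for the product lemma $\|\Lambda^{s+1-\alpha}(u\theta)\|_2\leq C(\|u\|_q\|\Lambda^{s+1-\alpha}\theta\|_2+\|\theta\|_q\|\Lambda^{s+1-\alpha}u\|_2)$ that the paper invokes. However, there are two genuine gaps. The more serious one is uniqueness: you propose testing the difference equation against $\tilde\theta:=\theta_1-\theta_2$ directly. The divergence-free structure only kills the term $\int\tilde\theta\,(u_2\cdot\nabla\tilde\theta)$; the term $\int\tilde\theta\,(\tilde u\cdot\nabla\theta_1)$ with $\tilde u=\mathcal{R}\tilde\theta$ survives, and after integration by parts it forces a full derivative onto $\tilde\theta$, i.e.\ a factor $\|\tilde\theta\|_{H^{1+b}}$ with $1+b>\alpha$. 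Since the $L^2$ energy identity only provides dissipation $\|\tilde\theta\|_{H^\alpha}^2$ and $\alpha<1$, this term cannot be interpolated away for general $s>0$, and the Gr\"onwall argument does not close. The paper instead tests against $\tilde\psi:=-\mathcal{K}\tilde\theta$: because $\tilde u=-\nabla^\perp\tilde\psi$, the product $\tilde\psi\tilde u=-\tfrac12\nabla^\perp(\tilde\psi^2)$ is itself divergence-free, so the dangerous term vanishes \emph{identically}, and the remaining term costs only $|\tilde\psi|_{1+1/q}^2$, which is absorbed into the dissipation $|\tilde\psi|_{1/2+\alpha}^2$ precisely because $1+1/q<1/2+\alpha$. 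This $H^{-1/2}$-level trick is the essential structural input you are missing.

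The second gap is in the approximation scheme. Your $L^q$ estimate tests the equation against $|\theta|^{q-2}\theta$ and uses the C\'ordoba--C\'ordoba inequality, which is valid for the actual PDE but not for the Galerkin system: $|\theta_N|^{q-2}\theta_N$ is not in the span of the first $N$ Fourier modes, so the projected nonlinearity $P_N(u_N\cdot\nabla\theta_N)$ no longer cancels against it, and the $N$-uniform $L^q$ bound (which your $H^s$ Gr\"onwall step needs as input) is not obtained. The paper avoids this by using a \emph{retarded mollification} $u_n=U_{\delta_n}[\theta_n]$ in place of spectral truncation: the approximate problem is then a genuine linear drift-diffusion equation on each subinterval $[k\delta_n,(k+1)\delta_n]$, for which the $L^q$ and $H^s$ estimates can be applied inductively without any projection interfering with the pointwise structure. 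Either you should switch to such a mollification, or you must supply a separate argument for the uniform $L^q$ control of the Galerkin approximants.
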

	For general $f\in L^1(0, T; L^q(\mathbb{T}^2))\cap  L^2(0, T; H^{s-\alpha}(\mathbb{T}^2))$, we consider
	$$\tilde{f}:= f- \int_{\mathbb{T}^2}f(t, x)\,\mathrm{d}x,\qquad \tilde{\theta}:= \theta- \int^t_0\int_{\mathbb{T}^2}f(\tau, x)\,\mathrm{d}x\mathrm{d}\tau.$$ 
	Then we have 
	$\int_{\mathbb{T}^2}\tilde{f}(t, x)\,\mathrm{d}x= 0$ for $t\leq T$ and
	$$\partial_t \tilde{\theta} + \mathcal{R}\tilde{\theta}\cdot \nabla \tilde{\theta} +(-\Delta)^\alpha \tilde{\theta} = \tilde{f},$$
	and we can apply (\ref{Resnick}) to $\tilde{f}, \tilde{\theta}$.
	
	Let $\theta_f$ be the solution corresponding to the source $f$. The explicit integral on the right hand side of (\ref{Resnick}) will not be used in later sections, and we are only interested in the behavior of $\theta_{\epsilon f_0}$ for the small parameter $\epsilon$. More precisely, we note that for fixed $\alpha, s$, (\ref{Resnick}) implies 
	\begin{equation}\label{Resnickeps}
		\|\theta_{\epsilon f_0}\|_{L^\infty(0, T; H^{s}(\mathbb{T}^2))}+ \|\theta_{\epsilon f_0}\|_{L^2(0, T; H^{s+\alpha}(\mathbb{T}^2))}\leq C_{T, f_0}\epsilon
	\end{equation}
	for $|\epsilon|< 1$ and $f_0\in C^\infty_c(\mathbb{T}^2\times (0, T))$.
	
	\section{Linearization}\label{lin}
	To prepare for the proof of the main theorem, we will rigorously perform the first and second order linearizations in this section. The advantage of the second order linearization is to obtain products of the solutions of the linearized equations from the original nonlinear equation, which will enable us to use the density result for linear equations in solving the inverse problem.
	
	The following well-posedness result for the linear fractional diffusion equation will be used to prove our linearization results. {Since it follows from the standard Galerkin approximations, we will not provide a detailed proof to avoid redundancy. The detailed arguments should be analogues of the ones in Section 2.1 in \cite{ruland2020quantitative} and Section 7.1.3 in \cite{evans1998partial}.}
	
	\begin{proposition}\label{p2}
		Let $f\in L^2(0, T; H^{-\alpha}(\mathbb{T}^2))$. Then
		\begin{equation}\label{Linfrac}
			\left\{
			\begin{aligned}
				\partial_t u + (-\Delta)^\alpha u &= f,\quad \,\,\, (x,t)\in \mathbb{T}^2\times (0, T),\\
				u(0)&= 0,\quad \,\,\,x\in \mathbb{T}^2\\
			\end{aligned}
			\right.
		\end{equation}  
		has a unique solution $u$ satisfying
		$$u\in L^2(0, T; H^{\alpha}(\mathbb{T}^2))\cap C([0, T]; L^{2}(\mathbb{T}^2)),\qquad 
		\partial_t u\in L^2(0, T; H^{-\alpha}(\mathbb{T}^2)),$$  
		and we have 
		\begin{equation}\label{H-1est}
			\|u\|_{L^2(0, T; H^{\alpha}(\mathbb{T}^2))}+\|u\|_{C([0, T]; L^{2}(\mathbb{T}^2))}\leq C\|f\|_{L^2(0, T; H^{-\alpha}(\mathbb{T}^2))}.
		\end{equation}
		Moreover, if $f\in L^2(0, T; L^2(\mathbb{T}^2))$, then $u$ satisfies
		$$u\in L^2(0, T; H^{2\alpha}(\mathbb{T}^2))\cap L^\infty(0, T; H^{\alpha}(\mathbb{T}^2)),\qquad 
		\partial_t u\in L^2(0, T; L^2(\mathbb{T}^2)),$$  
		and we have 
		\begin{equation}\label{L2est}
			\|u\|_{L^2(0, T; H^{2\alpha}(\mathbb{T}^2))}+\|u\|_{L^\infty(0, T; H^{\alpha}(\mathbb{T}^2))}\leq C'\|f\|_{L^2(0, T; L^2(\mathbb{T}^2))}.
		\end{equation}
	\end{proposition}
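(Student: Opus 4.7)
The plan is to use the standard Faedo--Galerkin scheme, exploiting the fact that on $\mathbb{T}^2$ the operator $(-\Delta)^\alpha$ is diagonalized by the Fourier basis $\{e^{2\pi i k\cdot x}\}_{k\in\mathbb{Z}^2}$ with eigenvalues $|2\pi k|^{2\alpha}$. I would take as approximation space $V_N=\mathrm{span}\{e^{2\pi i k\cdot x}:|k|\leq N\}$ and look for $u_N(t,x)=\sum_{|k|\leq N}c_k(t)e^{2\pi i k\cdot x}$ solving the $V_N$-projection of \eqref{Linfrac} with $u_N(0)=0$. Since the Fourier modes simultaneously diagonalize $(-\Delta)^\alpha$, the Galerkin system decouples into independent scalar linear ODEs $c_k'(t)+|2\pi k|^{2\alpha}c_k(t)=\hat f(k,t)$, which admit unique solutions on $[0,T]$ for any $f\in L^2(0,T;H^{-\alpha})$.

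The first energy estimate is obtained by testing the Galerkin equation against $u_N$ itself:
\[
\tfrac{1}{2}\tfrac{d}{dt}\|u_N\|_{L^2}^2 + \|(-\Delta)^{\alpha/2}u_N\|_{L^2}^2 = \langle f,u_N\rangle \le \|f\|_{H^{-\alpha}}\|u_N\|_{H^\alpha}.
\]
Absorbing the right-hand side with Young's inequality and integrating in time yields a uniform bound on $u_N$ in $L^\infty(0,T;L^2)\cap L^2(0,T;H^\alpha)$ controlled by $\|f\|_{L^2(0,T;H^{-\alpha})}$. The equation itself then gives $\|\partial_t u_N\|_{L^2(0,T;H^{-\alpha})}\lesssim \|f\|_{L^2(0,T;H^{-\alpha})}$. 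Extracting a weakly(-$*$) convergent subsequence produces a limit $u$ solving \eqref{Linfrac} and satisfying \eqref{H-1est}; the continuity $u\in C([0,T];L^2)$ is then automatic from the standard embedding of $\{v\in L^2(0,T;H^\alpha):\partial_t v\in L^2(0,T;H^{-\alpha})\}$ into $C([0,T];L^2)$.

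For the higher regularity statement under $f\in L^2(0,T;L^2)$, I would test the same Galerkin system against $(-\Delta)^\alpha u_N$ instead, which is a legal test in $V_N$ since $(-\Delta)^\alpha$ is a Fourier multiplier that preserves $V_N$. This yields
\[
\tfrac{1}{2}\tfrac{d}{dt}\|(-\Delta)^{\alpha/2}u_N\|_{L^2}^2 + \|(-\Delta)^\alpha u_N\|_{L^2}^2 = \langle f,(-\Delta)^\alpha u_N\rangle,
\]
and Young's inequality absorbs the right-hand side into the dissipation term at the cost of $\tfrac{1}{2}\|f\|_{L^2}^2$. Integrating and passing to the weak limit produces \eqref{L2est}, and substituting back into \eqref{Linfrac} gives $\partial_t u\in L^2(0,T;L^2)$. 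Uniqueness in either regime is an immediate consequence of \eqref{H-1est} applied to the difference of two candidate solutions with $f=0$.

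I do not expect a genuine obstacle here: the torus setting together with the Fourier diagonalization makes every step of the Galerkin argument transparent, and the only minor technical point is verifying that the weak limit genuinely attains the zero initial datum, which is handled by the standard compactness and test-function machinery as in \cite[Section 2.1]{ruland2020quantitative} and \cite[Section 7.1.3]{evans1998partial}.
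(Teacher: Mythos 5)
Your proposal is correct and is exactly the route the paper takes: the paper omits the proof, stating only that it ``follows from the standard Galerkin approximations'' analogous to Section 2.1 of \cite{ruland2020quantitative} and Section 7.1.3 of \cite{evans1998partial}, and your Fourier--Galerkin scheme with the two energy estimates (testing against $u_N$ and against $(-\Delta)^\alpha u_N$) is precisely that standard argument. The only detail worth flagging is that the dissipation term controls only the homogeneous $\dot H^\alpha$ seminorm, so the zero Fourier mode must be handled separately via its ODE $c_0'=\hat f(0,t)$ or absorbed by Gr\"onwall, but this is routine and does not affect the conclusion.
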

	
	
	Let $f_1, f_2\in C^\infty_c(\mathbb{T}^2\times (0, T))$. We use $w_j$ ($j= 1,2$) to denote the solution of 
	\begin{equation}\label{Linfracf1f2}
		\left\{
		\begin{aligned}
			\partial_t w + (-\Delta)^\alpha w &= f_j,\quad \,\,\, (x,t)\in \mathbb{T}^2\times (0, T),\\
			w(0)&= 0,\quad \,\,\,x\in \mathbb{T}^2.\\
		\end{aligned}
		\right.
	\end{equation}  
	
	We have the following first order linearization result. As before, we use $\theta_f$ to denote the solution of (\ref{ASE}) corresponding to the source $f$.
	
	\begin{proposition}\label{p3}
		Suppose $s+\alpha> 2$. For $j= 1,2$, we have 
		$$\frac{\theta_{\epsilon f_j}}{\epsilon}\to w_j,\qquad
		\frac{\theta_{\epsilon (f_1+ f_2)}- \theta_{\epsilon f_j}}{\epsilon}\to w_{3-j}$$
		in $L^2(0, T; H^{2\alpha}(\mathbb{T}^2))\cap L^\infty(0, T; H^{\alpha}(\mathbb{T}^2))$ as $\epsilon\to 0$.
	\end{proposition}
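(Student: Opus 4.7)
The plan is to reduce the nonlinear problem to the linear fractional diffusion equation by subtracting and estimating the nonlinear remainder. Set $\tilde{\theta}_\epsilon^{(j)} := \theta_{\epsilon f_j}/\epsilon$ and $v_j^\epsilon := \tilde{\theta}_\epsilon^{(j)} - w_j$. Dividing the equation (\ref{ASE}) for $\theta_{\epsilon f_j}$ by $\epsilon$ shows that $\tilde{\theta}_\epsilon^{(j)}$ satisfies
\begin{equation*}
\partial_t \tilde{\theta}_\epsilon^{(j)} + \epsilon\,\mathcal{R}\tilde{\theta}_\epsilon^{(j)}\cdot \nabla \tilde{\theta}_\epsilon^{(j)} + (-\Delta)^\alpha \tilde{\theta}_\epsilon^{(j)} = f_j.
\end{equation*}
Subtracting (\ref{Linfracf1f2}), the difference $v_j^\epsilon$ solves the linear fractional heat equation with zero initial data and forcing
\begin{equation*}
F_\epsilon := -\epsilon\,\mathcal{R}\tilde{\theta}_\epsilon^{(j)} \cdot \nabla \tilde{\theta}_\epsilon^{(j)}.
\end{equation*}
If I can show $\|F_\epsilon\|_{L^2(0,T;L^2)} \to 0$, then the $L^2$ regularity statement (\ref{L2est}) from Proposition \ref{p2} gives convergence of $v_j^\epsilon$ to zero in exactly the space $L^2(0,T;H^{2\alpha}) \cap L^\infty(0,T;H^\alpha)$.

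The key estimate is therefore a uniform-in-$\epsilon$ bound on the product $\mathcal{R}\tilde{\theta}_\epsilon^{(j)} \cdot \nabla \tilde{\theta}_\epsilon^{(j)}$ in $L^2_t L^2_x$. From the forward estimate (\ref{Resnickeps}) applied at scale $\epsilon$, we have
\begin{equation*}
\|\tilde{\theta}_\epsilon^{(j)}\|_{L^\infty(0,T;H^s)} + \|\tilde{\theta}_\epsilon^{(j)}\|_{L^2(0,T;H^{s+\alpha})} \leq C_{T,f_j}
\end{equation*}
uniformly for $|\epsilon|<1$. Since $s+\alpha > 2 = 1+n/2$ in the present two-dimensional setting, the Sobolev embedding $H^{s+\alpha}(\mathbb{T}^2) \hookrightarrow W^{1,\infty}(\mathbb{T}^2)$ controls $\|\nabla \tilde{\theta}_\epsilon^{(j)}(t)\|_{L^\infty}$ by $\|\tilde{\theta}_\epsilon^{(j)}(t)\|_{H^{s+\alpha}}$, and the $L^p$-boundedness of $\mathcal{R}$ from assumption (\ref{Ass3}) gives $\|\mathcal{R}\tilde{\theta}_\epsilon^{(j)}(t)\|_{L^2} \lesssim \|\tilde{\theta}_\epsilon^{(j)}(t)\|_{L^2} \lesssim \|\tilde{\theta}_\epsilon^{(j)}(t)\|_{H^s}$. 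Combining and integrating in time,
\begin{equation*}
\|\mathcal{R}\tilde{\theta}_\epsilon^{(j)} \cdot \nabla \tilde{\theta}_\epsilon^{(j)}\|_{L^2(0,T;L^2)} \lesssim \|\tilde{\theta}_\epsilon^{(j)}\|_{L^\infty(0,T;H^s)} \|\tilde{\theta}_\epsilon^{(j)}\|_{L^2(0,T;H^{s+\alpha})} \leq C.
\end{equation*}
Therefore $\|F_\epsilon\|_{L^2(0,T;L^2)} \leq C\epsilon$, and (\ref{L2est}) yields $\|v_j^\epsilon\|_{L^2(0,T;H^{2\alpha})} + \|v_j^\epsilon\|_{L^\infty(0,T;H^\alpha)} \lesssim \epsilon \to 0$, which proves the first convergence.

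For the second convergence, the first one applied to the source $f_1+f_2$ yields $\theta_{\epsilon(f_1+f_2)}/\epsilon \to w_1+w_2$ in the same space (noting that $w_1+w_2$ solves (\ref{Linfrac}) with forcing $f_1+f_2$ by linearity). Writing
\begin{equation*}
\frac{\theta_{\epsilon(f_1+f_2)} - \theta_{\epsilon f_j}}{\epsilon} - w_{3-j} = \Bigl(\frac{\theta_{\epsilon(f_1+f_2)}}{\epsilon} - (w_1+w_2)\Bigr) - \Bigl(\frac{\theta_{\epsilon f_j}}{\epsilon} - w_j\Bigr)
\end{equation*}
and applying the first convergence to each bracket in turn completes the proof. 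The only place where care is needed is verifying that the condition $s+\alpha>2$ truly suffices to put the pointwise derivative on $\tilde{\theta}_\epsilon^{(j)}$; the hypothesis on $s$ is chosen exactly to match the 2D critical embedding, which is why this is the stated assumption.
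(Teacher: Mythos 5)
Your proof is correct and follows essentially the same route as the paper: subtract the linear solution, note that the remainder solves the linear fractional heat equation with forcing $-\epsilon^{-1}\mathcal{R}(\theta_{\epsilon f_j})\cdot\nabla\theta_{\epsilon f_j}$, bound that forcing by $C\epsilon$ in $L^2(0,T;L^2)$ using (\ref{Resnickeps}) together with the embedding $H^{s+\alpha-1}(\mathbb{T}^2)\hookrightarrow L^\infty(\mathbb{T}^2)$ (equivalently your $H^{s+\alpha}\hookrightarrow W^{1,\infty}$), and conclude via (\ref{L2est}). The only, harmless, deviation is in the second limit, where you use linearity and the triangle inequality in place of the paper's direct $O(\epsilon)$ estimate on the difference of the two nonlinear terms; both arguments work.
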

	\begin{proof}
		Let $h= \frac{\theta_{\epsilon f_j}}{\epsilon}- w_j$, which satisfies
		\begin{equation}\label{Linfrach1}
			\left\{
			\begin{aligned}
				\partial_t h + (-\Delta)^\alpha h &= 
				-\frac{\mathcal{R}(\theta_{\epsilon f_j})\cdot \nabla \theta_{\epsilon f_j}}{\epsilon},\quad \,\,\, (x,t)\in \mathbb{T}^2\times (0, T),\\
				h(0)&= 0,\quad \,\,\,x\in \mathbb{T}^2.\\
			\end{aligned}
			\right.
		\end{equation}  
		Based on the continuous embedding 
		$$H^{s+\alpha-1}(\mathbb{T}^2)\hookrightarrow L^\infty(\mathbb{T}^2)$$
		and (\ref{Resnickeps}), we have
		\[\|\mathcal{R}(\theta_{\epsilon f_j})\cdot \nabla \theta_{\epsilon f_j}\|_{L^2(0, T; L^2(\mathbb{T}^2))}\leq C_{T, f_j}\epsilon^2.\]
		
		Then (\ref{L2est}) implies $h\to 0$ in $L^2(0, T; H^{2\alpha}(\mathbb{T}^2))\cap L^\infty(0, T; H^{\alpha}(\mathbb{T}^2))$ as $\epsilon\to 0$.
		
		Now let $$h= \frac{\theta_{\epsilon (f_1+ f_2)}- \theta_{\epsilon f_j}}{\epsilon}- w_{3-j},$$
		which satisfies
		\begin{equation}\label{Linfrach2}
			\left\{
			\begin{aligned}
				\partial_t h + (-\Delta)^\alpha h &= 
				-\frac{\mathcal{R}(\theta_{\epsilon (f_1+f_2)})\cdot \nabla \theta_{\epsilon (f_1+f_2)}-\mathcal{R}(\theta_{\epsilon f_j})\cdot \nabla \theta_{\epsilon f_j}}{\epsilon},\quad \,\,\, (x,t)\in \mathbb{T}^2\times (0, T),\\
				h(0)&= 0,\quad \,\,\,x\in \mathbb{T}^2.\\
			\end{aligned}
			\right.
		\end{equation}  
		The same argument shows that the right hand side of the equation is $O(\epsilon)$, and thus $h\to 0$ as $\epsilon\to 0$.
	\end{proof}
	
	Next, we use $v$ to denote the solution of 
	\begin{equation}\label{Linfracv}
		\left\{
		\begin{aligned}
			\partial_t v + \mathcal{R}w_1\cdot \nabla w_2+ \mathcal{R}w_2\cdot \nabla w_1+ (-\Delta)^\alpha v &=0,\quad \,\,\,(x,t)\in \mathbb{T}^2\times (0, T),\\
			v(0)&= 0,\quad \,\,\,x\in \mathbb{T}^2.\\
		\end{aligned}
		\right.
	\end{equation}  
	
	We have the following second order linearization result.
	\begin{proposition}\label{p4}
		We have 
		$$\frac{\theta_{\epsilon (f_1+ f_2)}- \theta_{\epsilon f_1}- \theta_{\epsilon f_2}}{\epsilon^2}\to v$$
		in $L^2(0, T; H^{\alpha}(\mathbb{T}^2))\cap C([0, T]; L^{2}(\mathbb{T}^2))$ as $\epsilon\to 0$.
	\end{proposition}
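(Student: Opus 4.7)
The plan is to imitate the strategy of Proposition~\ref{p3}, pushing the expansion one order further. Set $\theta^j := \theta_{\epsilon f_j}$ ($j=1,2$), $\theta^{12} := \theta_{\epsilon(f_1+f_2)}$, $g_\epsilon := \theta^{12} - \theta^1 - \theta^2$, and $h_\epsilon := g_\epsilon/\epsilon^2 - v$. Forming the linear combination of the three nonlinear equations, the linear source $\epsilon(f_1+f_2)$ cancels and the fractional Laplacian acts linearly, so
\begin{equation*}
\partial_t g_\epsilon + (-\Delta)^\alpha g_\epsilon = -\bigl[\mathcal{R}\theta^{12}\cdot\nabla\theta^{12} - \mathcal{R}\theta^1\cdot\nabla\theta^1 - \mathcal{R}\theta^2\cdot\nabla\theta^2\bigr], \qquad g_\epsilon(0)=0.
\end{equation*}
Subtracting $\epsilon^2$ times the equation (\ref{Linfracv}) for $v$, $h_\epsilon$ solves a linear fractional heat equation of the form (\ref{Linfrac}) with zero initial data and a source $F_\epsilon$. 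The goal is to show $F_\epsilon \to 0$ in $L^2(0,T;H^{-\alpha}(\mathbb{T}^2))$: by (\ref{H-1est}) in Proposition~\ref{p2} this immediately gives the desired convergence $h_\epsilon \to 0$ in $L^2(0,T;H^\alpha)\cap C([0,T];L^2)$.

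Substituting $\theta^{12} = \theta^1 + \theta^2 + g_\epsilon$ and expanding the nonlinearity by bilinearity, after dividing by $\epsilon^2$ one obtains
\begin{equation*}
\begin{aligned}
-F_\epsilon = {} & \bigl[\mathcal{R}(\theta^1/\epsilon)\cdot\nabla(\theta^2/\epsilon) - \mathcal{R}w_1\cdot\nabla w_2\bigr] + \bigl[\mathcal{R}(\theta^2/\epsilon)\cdot\nabla(\theta^1/\epsilon) - \mathcal{R}w_2\cdot\nabla w_1\bigr] \\
& + \sum_{j=1}^{2}\bigl[\mathcal{R}(\theta^j/\epsilon)\cdot\nabla(g_\epsilon/\epsilon) + \mathcal{R}(g_\epsilon/\epsilon)\cdot\nabla(\theta^j/\epsilon)\bigr] + \mathcal{R}(g_\epsilon/\epsilon)\cdot\nabla(g_\epsilon/\epsilon).
\end{aligned}
\end{equation*}
Proposition~\ref{p3} now provides both convergences needed: $\theta^j/\epsilon \to w_j$ in $L^\infty(0,T;H^\alpha)\cap L^2(0,T;H^{2\alpha})$, and through the telescoping identity $g_\epsilon/\epsilon = (\theta^{12}-\theta^1)/\epsilon - \theta^2/\epsilon$ also $g_\epsilon/\epsilon \to w_2 - w_2 = 0$ in the same space. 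Splitting each leading-order difference in the usual way $\mathcal{R}a\cdot\nabla b - \mathcal{R}A\cdot\nabla B = \mathcal{R}(a-A)\cdot\nabla B + \mathcal{R}A\cdot\nabla(b-B) + \mathcal{R}(a-A)\cdot\nabla(b-B)$, every summand of $F_\epsilon$ becomes a product in which at least one factor tends to $0$ while the other remains uniformly bounded.

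The main obstacle is a bilinear estimate of the schematic form
\begin{equation*}
\|\mathcal{R}\phi\cdot\nabla\psi\|_{L^2(0,T;H^{-\alpha})} \lesssim \|\phi\|_{L^\infty(0,T;H^\alpha)}\|\psi\|_{L^2(0,T;H^{2\alpha})} + \|\phi\|_{L^2(0,T;H^{2\alpha})}\|\psi\|_{L^\infty(0,T;H^\alpha)},
\end{equation*}
uniformly for $\alpha\in(\tfrac{1}{2},1)$; here a crude $L^\infty\cdot L^2$ bound is not available since $H^\alpha$ does not embed into $L^\infty(\mathbb{T}^2)$ for $\alpha<1$. This is exactly where the divergence-free structure of $\mathcal{R}$ is genuinely used: because $\nabla\cdot\mathcal{R}\phi = 0$, one rewrites $\mathcal{R}\phi\cdot\nabla\psi = \nabla\cdot(\psi\,\mathcal{R}\phi)$ and reduces matters to controlling $\psi\,\mathcal{R}\phi$ in $H^{1-\alpha}(\mathbb{T}^2)$ via a standard fractional Sobolev product estimate together with the $H^r$-boundedness of $\mathcal{R}$. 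Combining this estimate with the convergences from Proposition~\ref{p3} shows each summand of $F_\epsilon$ tends to $0$ in $L^2(0,T;H^{-\alpha})$, and Proposition~\ref{p2} then concludes the proof.
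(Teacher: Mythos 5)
Your proposal is correct and follows the same skeleton as the paper's proof: write $h_\epsilon$ as the solution of the linear fractional heat equation \eqref{Linfrac} with zero data and a source, and reduce everything to showing that source tends to $0$ in $L^2(0,T;H^{-\alpha}(\mathbb{T}^2))$, after which \eqref{H-1est} finishes the argument. The differences are in the two technical ingredients. First, the algebra: you substitute $\theta^{12}=\theta^1+\theta^2+g_\epsilon$ and expand by bilinearity, using $g_\epsilon/\epsilon\to 0$ (obtained by telescoping via Proposition \ref{p3}); the paper instead splits $S_\epsilon$ directly into four telescoped bilinear differences $S_{\epsilon,1},\dots,S_{\epsilon,4}$, each of which pairs a quotient converging to some $w_j$ (or $\mathcal{R}w_j$) against another such quotient. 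These are equivalent reorganizations of the same cancellation, and both rely only on the first-order linearization. Second, and more substantively, the key bilinear estimate $\|\mathcal{R}\phi\cdot\nabla\psi\|_{H^{-\alpha}}\lesssim\|\phi\|_{H^{\alpha}}\|\psi\|_{H^{2\alpha}}$ (in the appropriate mixed space--time norms): you prove it by exploiting $\operatorname{div}\mathcal{R}\phi=0$ to write $\mathcal{R}\phi\cdot\nabla\psi=\nabla\cdot(\psi\,\mathcal{R}\phi)$ and then estimating $\psi\,\mathcal{R}\phi$ in $H^{1-\alpha}$ by a fractional product estimate (which works since $2\alpha+\alpha-(1-\alpha)>1$ for $\alpha>\tfrac12$). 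The paper does not use the divergence-free structure here at all; it instead invokes the chain of embeddings $H^{2\alpha-1}(\mathbb{T}^2)\hookrightarrow L^{\frac{1}{1-\alpha}}(\mathbb{T}^2)\hookrightarrow M(H^{\alpha}\to H^{-\alpha})$, viewing $\nabla\psi\in H^{2\alpha-1}$ as a pointwise multiplier acting on $\mathcal{R}\phi\in H^{\alpha}$. Your route buys a more self-contained argument that avoids multiplier spaces at the cost of invoking the structure of $\mathcal{R}$; the paper's route is structure-free and would apply to a non-divergence-free drift. Both are valid, and both correctly identify that a crude $L^\infty\cdot L^2$ bound is unavailable at the limited regularity of the limits $w_j$. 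One small point worth making explicit in a final write-up: the boundedness of $\mathcal{R}$ on $H^r(\mathbb{T}^2)$, which you use when passing from $\mathcal{R}\phi$ to $\phi$, follows from \eqref{Ass2} since the symbol $2\pi i k^\perp\hat{K}(k)$ is bounded.
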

	\begin{proof}
		Let $$h= \frac{\theta_{\epsilon (f_1+ f_2)}- \theta_{\epsilon f_1}- \theta_{\epsilon f_2}}{\epsilon^2}- v,$$ which satisfies
		\begin{equation}\label{Linfracvw}
			\left\{
			\begin{aligned}
				\partial_t h + (-\Delta)^\alpha h &= 
				-S_\epsilon,\quad \,\,\, (x,t)\in \mathbb{T}^2\times (0, T),\\
				h(0)&= 0,\quad \,\,\,x\in \mathbb{T}^2,\\
			\end{aligned}
			\right.
		\end{equation} 
		where
		\begin{equation}
			\begin{split}
				S_\epsilon:= \frac{\mathcal{R}(\theta_{\epsilon (f_1+f_2)})\cdot \nabla \theta_{\epsilon (f_1+f_2)}-\mathcal{R}(\theta_{\epsilon f_1})\cdot \nabla \theta_{\epsilon f_1}- \mathcal{R}(\theta_{\epsilon f_2})\cdot \nabla \theta_{\epsilon f_2}}{\epsilon^2}
				- \mathcal{R}w_1\cdot \nabla w_2- \mathcal{R}w_2\cdot \nabla w_1.
			\end{split}
		\end{equation}
		
		To estimate $S_\epsilon$, we write
		$$S_\epsilon= S_{\epsilon, 1}+ S_{\epsilon, 2}+ S_{\epsilon, 3}+ S_{\epsilon, 4},$$
		where
		$$S_{\epsilon, 1}:= \frac{\mathcal{R}(\theta_{\epsilon (f_1+f_2)})- \mathcal{R}(\theta_{\epsilon f_1})}{\epsilon}\cdot \frac{\nabla (\theta_{\epsilon (f_1+f_2)}-\theta_{\epsilon f_1})}{\epsilon}- \mathcal{R}w_2\cdot \nabla w_1,$$
		$$S_{\epsilon, 2}:= \frac{\mathcal{R}(\theta_{\epsilon f_1})}{\epsilon}\cdot \frac{\nabla (\theta_{\epsilon (f_1+f_2)}-\theta_{\epsilon f_2})}{\epsilon}- \mathcal{R}w_1\cdot \nabla w_2,$$
		$$S_{\epsilon, 3}:= \frac{\mathcal{R}(\theta_{\epsilon (f_1+f_2)})- \mathcal{R}(\theta_{\epsilon f_2})}{\epsilon}\cdot \frac{\nabla \theta_{\epsilon f_2}}{\epsilon}- \mathcal{R}w_1\cdot \nabla w_2,$$
		$$S_{\epsilon, 4}:= -\frac{\mathcal{R}(\theta_{\epsilon f_1})}{\epsilon}\cdot \frac{\nabla \theta_{\epsilon f_2}}{\epsilon}+ \mathcal{R}w_1\cdot \nabla w_2.$$
		Note that we have the continuous embeddings
		$$H^{2\alpha-1}(\mathbb{T}^2)\hookrightarrow L^{\frac{1}{1-\alpha}}(\mathbb{T}^2)\hookrightarrow L^{\frac{1}{\alpha}}(\mathbb{T}^2)\hookrightarrow M(H^{\alpha}(\mathbb{T}^2)\to H^{-\alpha}(\mathbb{T}^2)),$$
		where we use $\frac{1}{2}< \alpha< 1$.
		Here $M(H^{\alpha}(\mathbb{T}^2)\to H^{-\alpha}(\mathbb{T}^2))$ denotes the space of pointwise multipliers from $H^{\alpha}(\mathbb{T}^2)$ to $H^{-\alpha}(\mathbb{T}^2))$.
		Then based on the first order linearization result, we know that $S_{\epsilon, j}$ ($j= 1, 2, 3, 4$) goes to $0$ in 
		$L^2(0, T; H^{-\alpha}(\mathbb{T}^2))$ as $\epsilon\to 0$. By (\ref{H-1est}), we conclude that $h\to 0$ in $L^2(0, T; H^{\alpha}(\mathbb{T}^2))\cap C([0, T]; L^{2}(\mathbb{T}^2))$ as $\epsilon\to 0$.
		
	\end{proof}
	
	\section{Unique continuation and Runge approximation}\label{runge}  
	We will derive a Runge approximation property based on the the unique continuation property of the fractional Laplacian in this section. These typical nonlocal features make the Calder\'on type inverse problems for fractional operators much more manageable compared with their local counterparts.
	
	The following unique continuation property of the fractional Laplacian can be viewed as a very special case of the much more general entanglement principle (see Theorem 1.8 in \cite{feizmohammadi2024calder}) on closed manifolds.
	
	\begin{proposition}\label{UCP}
		Let $r\in (0, \infty)\setminus \mathbb{N}$. Let $W\subset \mathbb{T}^2$ be nonempty and open. Suppose that $u\in H^{2r}(\mathbb{T}^2)$ satisfies
		$$(-\Delta)^r u= u= 0$$ in $W$. Then $u= 0$ in $\mathbb{T}^2$.
	\end{proposition}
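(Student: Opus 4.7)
My plan is to reduce the claim to a strong unique continuation statement for the Caffarelli--Silvestre extension, adapted to the torus. Given $u\in H^{2r}(\mathbb{T}^2)$, I would construct the extension $U:\mathbb{T}^2\times (0,\infty)\to \mathbb{R}$ via its Fourier series in the spatial variable, so that $U$ solves the degenerate elliptic equation
\begin{equation*}
\div_{x,y}\bigl(y^{1-2r}\nabla_{x,y} U\bigr) = 0,\qquad U(x,0)= u(x),
\end{equation*}
and the renormalized conormal derivative recovers the fractional Laplacian, i.e.
\begin{equation*}
-c_r \lim_{y\to 0^+} y^{1-2r}\partial_y U(x,y) = (-\Delta)^r u(x)
\end{equation*}
for some constant $c_r>0$ depending only on $r$. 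On the torus this extension is built termwise using the ODE in $y$ that governs each Fourier mode, and convergence of the series in the appropriate weighted Sobolev space follows from $u\in H^{2r}(\mathbb{T}^2)$.

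Under the hypothesis $u=(-\Delta)^r u=0$ in $W$, the extension $U$ therefore carries vanishing Dirichlet and weighted Neumann data on $W\times\{0\}$. I would then apply the boundary strong unique continuation principle for the Caffarelli--Silvestre operator: after an even reflection across $\{y=0\}$, $U$ extends to a solution of an $A_2$-weighted elliptic equation on $\mathbb{T}^2\times\mathbb{R}$, and the vanishing Cauchy data force $U$ to vanish to infinite order at each point of $W\times\{0\}$. Standard (non-degenerate) elliptic unique continuation for the equation $\div_{x,y}(y^{1-2r}\nabla_{x,y} U)=0$ in the open upper half $\{y>0\}$, applied away from the singular hyperplane, then propagates the vanishing throughout $\mathbb{T}^2\times (0,\infty)$. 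Taking the trace at $y=0$ yields $u\equiv 0$ on $\mathbb{T}^2$.

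The main obstacle is the boundary strong unique continuation step across the singular weight $y^{1-2r}$, which requires the Carleman estimates developed by R\"uland and, in the closed-manifold setting, the entanglement principle of \cite{feizmohammadi2024calder}. Once that tool is available, the remaining ingredients---the Fourier-series construction of the extension on $\mathbb{T}^2$, the identification of the conormal derivative with $(-\Delta)^r u$, and the propagation of vanishing in the bulk---are routine. The restriction $r\in(0,\infty)\setminus\mathbb{N}$ enters naturally through the degenerate weight: when $r$ is a non-integer the operator $(-\Delta)^r$ is genuinely nonlocal and the reflected weight $|y|^{1-2r}$ lies in the Muckenhoupt class $A_2$, making the reflection and subsequent unique continuation argument available.
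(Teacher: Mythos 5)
First, note that the paper does not actually prove this proposition: it is quoted as a special case of the entanglement principle on closed manifolds (Theorem 1.8 in \cite{feizmohammadi2024calder}), with the remark that the Euclidean analogue goes back to \cite{ghosh2020calderon} via the Carleman estimates of \cite{ruland2015unique}. Your sketch is precisely that classical Caffarelli--Silvestre route, transplanted to $\mathbb{T}^2$ by building the extension mode by mode. For $r\in(0,1)$ this is a legitimate and essentially complete alternative to the citation: the extension equation is a local PDE, so R\"uland's boundary Carleman estimate applies in a chart near a point of $W\times\{0\}$, and the interior propagation and trace steps are routine. Since the paper only ever applies the proposition with $r=\alpha\in(\tfrac12,1)$, this would suffice for everything downstream.

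However, as a proof of the proposition \emph{as stated} (all $r\in(0,\infty)\setminus\mathbb{N}$) there is a genuine gap, and it sits exactly at the step you flag as the key one. Your closing claim that ``the reflected weight $|y|^{1-2r}$ lies in the Muckenhoupt class $A_2$'' for every non-integer $r$ is false: $|y|^{a}\in A_2(\mathbb{R})$ requires $-1<a<1$, i.e.\ $1-2r\in(-1,1)$, i.e.\ $r\in(0,1)$. For $r>1$ the weight $y^{1-2r}$ is not even locally integrable near $y=0$, the one-dimensional extension ODE no longer has the stated well-posed Dirichlet/weighted-Neumann structure, and the reflection plus Carleman machinery of \cite{ruland2015unique} simply does not apply; one needs either the higher-order extension techniques as in \cite{covi2022higher} or the spectral approach of \cite{feizmohammadi2024calder}. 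The gap is reparable on the torus without new machinery: write $r=m+\sigma$ with $m\in\mathbb{N}$ and $\sigma\in(0,1)$, and set $v:=(-\Delta)^m u\in H^{2\sigma}(\mathbb{T}^2)$. Since $(-\Delta)^m$ is local, $u=0$ in $W$ gives $v=0$ in $W$, while $(-\Delta)^\sigma v=(-\Delta)^r u=0$ in $W$; the $\sigma\in(0,1)$ case then yields $v\equiv 0$, hence $\hat{u}(k)=0$ for all $k\neq 0$, so $u$ is constant, and $u=0$ in $W$ forces $u\equiv 0$. You should either add this reduction or restrict the statement you prove to $r\in(0,1)$.
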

	
	We remark that the unique continuation property of the fractional Laplacian in $\mathbb{R}^n$ was established earlier (see Theorem 1.2 in \cite{ghosh2020calderon}) relying on the Carleman estimates in \cite{ruland2015unique}.
	
	The Runge approximation property below will follow from the unique continuation property above. For the exterior problem for the fractional diffusion equation in $\mathbb{R}^n$,
	an analogous density result has been obtained in Section 2 in \cite{ruland2020quantitative}. Another analogous result for the fractionally damped wave equation can be found in Section 4 in \cite{li2023inverse}.
	
	\begin{proposition}\label{RAP}
		The set
		$$S:=\{u_f|_{(0, T)\times(\mathbb{T}^2\setminus W)}: f\in C^\infty_c(W\times (0, T))\}$$
		is dense in $L^2(0, T; L^2(\mathbb{T}^2\setminus W))$. Here $u_f$ is the solution of 
		\begin{equation}\label{Lin}
			\left\{
			\begin{aligned}
				\partial_t u + (-\Delta)^\alpha u &= f,\quad \,\,\, (x,t)\in \mathbb{T}^2\times (0, T),\\
				u(0)&= 0,\quad \,\,\,x\in \mathbb{T}^2.\\
			\end{aligned}
			\right.
		\end{equation}  
	\end{proposition}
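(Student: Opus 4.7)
The plan is a standard duality argument via Hahn--Banach, combined with the unique continuation property of Proposition \ref{UCP}. Suppose, for contradiction, that $S$ is not dense in $L^2(0, T; L^2(\mathbb{T}^2\setminus W))$. Then there exists a nonzero $g\in L^2(0, T; L^2(\mathbb{T}^2\setminus W))$ (which I extend by zero to all of $\mathbb{T}^2$) such that
$$\int_0^T\int_{\mathbb{T}^2} u_f\, g \,\mathrm{d}x\,\mathrm{d}t = 0 \qquad \text{for every } f\in C^\infty_c(W\times(0, T)).$$
The goal is to exploit this orthogonality to force $g\equiv 0$.

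To turn the orthogonality relation into a statement on $\phi$, I would introduce the backward-in-time adjoint problem
$$-\partial_t \phi + (-\Delta)^\alpha \phi = g, \qquad \phi(T)=0,$$
which, after time reversal, falls under Proposition \ref{p2}. Since $g\in L^2(0, T; L^2(\mathbb{T}^2))$, the $L^2$-regularity estimate (\ref{L2est}) guarantees $\phi\in L^2(0, T; H^{2\alpha}(\mathbb{T}^2))\cap L^\infty(0, T; H^\alpha(\mathbb{T}^2))$ with $\partial_t\phi\in L^2(0, T; L^2(\mathbb{T}^2))$. A space-time integration by parts against $u_f$, using $u_f(0)=0$ and $\phi(T)=0$ to kill the boundary terms and moving $(-\Delta)^\alpha$ onto $u_f$ by self-adjointness, converts the orthogonality above into
$$\int_0^T\int_W f\, \phi\,\mathrm{d}x\,\mathrm{d}t = 0 \qquad \text{for every } f\in C^\infty_c(W\times(0,T)),$$
which forces $\phi=0$ on $W\times(0, T)$.

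Now I combine the two pieces of information available on $W$. Since $g$ vanishes on $W$, the equation satisfied by $\phi$ reduces to $(-\Delta)^\alpha \phi = \partial_t \phi$ on $W\times(0,T)$, and because $\phi$ itself vanishes there so does $\partial_t \phi$, leaving $(-\Delta)^\alpha\phi=0$ on $W\times(0,T)$. For a.e. $t\in(0,T)$, then, $\phi(\cdot, t)\in H^{2\alpha}(\mathbb{T}^2)$ satisfies both $\phi(\cdot,t)=0$ and $(-\Delta)^\alpha \phi(\cdot,t)=0$ in $W$. Applying Proposition \ref{UCP} with $r=\alpha\in(\tfrac12,1)\subset (0,\infty)\setminus\mathbb{N}$ yields $\phi(\cdot,t)\equiv 0$ on $\mathbb{T}^2$ for a.e. $t$, hence $\phi\equiv 0$, and therefore $g=-\partial_t\phi+(-\Delta)^\alpha\phi\equiv 0$, contradicting the choice of $g$.

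The only delicate step I anticipate is justifying the integration by parts at the regularity level given by Proposition \ref{p2}: the term $\int u_f\,\partial_t\phi$ needs to be interpreted via the $L^2(0,T; H^{-\alpha})$--$L^2(0,T; H^\alpha)$ duality, and the symmetrization of $(-\Delta)^\alpha$ should be verified. This is routine using the extra smoothness of $u_f$ (which, since $f\in C^\infty_c$, is as regular as needed through Proposition \ref{p1} or through an independent application of Proposition \ref{p2}), together with a density approximation of $g$ by smoother functions if necessary. Beyond this bookkeeping, no machinery beyond Propositions \ref{p2} and \ref{UCP} is required.
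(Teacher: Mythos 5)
Your proposal is correct and follows essentially the same route as the paper: extend $g$ by zero, solve the backward dual problem via Proposition \ref{p2}, integrate by parts to conclude $v=0$ on $W\times(0,T)$, deduce $(-\Delta)^\alpha v=0$ there from the dual equation, and invoke the unique continuation property of Proposition \ref{UCP} to get $v\equiv 0$ and hence $g=0$. The only differences are presentational (framing via Hahn--Banach contradiction rather than the direct orthogonality implication, and your explicit flagging of the duality pairing needed to justify the integration by parts).
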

	\begin{proof}
		It suffices to prove the following statement: 
		Let $g\in L^2(0, T; L^2(\mathbb{T}^2\setminus W))$. If 
		$$\int^T_0\int_{\mathbb{T}^2\setminus W}ug= 0$$ for all $u\in S$, then $g= 0$. 
		
		We consider $\tilde{g}\in L^2(0, T; L^2(\mathbb{T}^2))$ which extends $g$ by zeros in $W$, and the dual problem
		\begin{equation}\label{duallin}
			\begin{aligned}
				-\partial_t v + (-\Delta)^\alpha v&= \tilde{g},\quad \,\,\,(x,t)\in\mathbb{T}^2\times (0, T)\\			
				v(T)&= 0,\quad \,\,\,x\in \mathbb{T}^2,\\
			\end{aligned}
		\end{equation}
		which has the solution $v$ satisfying
		$$v\in L^2(0, T; H^{2\alpha}(\mathbb{T}^2))\cap L^\infty(0, T; H^{\alpha}(\mathbb{T}^2)),\qquad 
		\partial_t v\in L^2(0, T; L^2(\mathbb{T}^2)).$$
		The assumption implies 
		\begin{equation}\label{RAP1}
			0= \int^T_0\langle -\partial_t v + (-\Delta)^\alpha v, u\rangle\,\mathrm{d} t
		\end{equation}
		for $u\in S$. Based on the initial and final conditions, we integrate by parts to obtain
		
		$$0= \int^T_0\langle \partial_t u + (-\Delta)^\alpha u, v\rangle\,\mathrm{d} t=
		\int^T_0\int_{\mathbb{T}^2}fv\mathrm{d} x\mathrm{d} t= \int^T_0\int_{W}fv\mathrm{d} x\mathrm{d} t$$
		for $f\in C^\infty_c(W\times (0, T))$ since $u$ is the solution of (\ref{Lin}). Hence $v= 0$ in $W\times (0, T)$.
		Note that 
		$$-\partial_t v + (-\Delta)^\alpha v= 0$$
		in $W\times (0, T)$ since $v$ is the solution of (\ref{duallin}), so $(-\Delta)^\alpha v= 0$ in $W\times (0, T)$. By the unique continuation property, we have $v= 0$ in $\mathbb{T}^2\times (0, T)$, and thus $g= 0$.
	\end{proof}
	
	\section{Proof of the main theorem}\label{proof}
	We are ready to prove Theorem \ref{main}.
	\begin{proof}[Proof of Theorem \ref{main}]
		For $f\in C^\infty_c(W\times (0, T))$, we use $\theta^{(j)}$ ($j= 1, 2$) to denote the solution of 
		\begin{equation}\label{ASEj}
			\left\{
			\begin{aligned}
				\partial_t \theta + \mathcal{R}_j\theta\cdot \nabla \theta +(-\Delta)^\alpha \theta &= f,\quad \,\,\, (x,t)\in \mathbb{T}^2\times (0, T),\\
				\theta(0)&= 0,\quad \,\,\,x\in \mathbb{T}^2.\\
			\end{aligned}
			\right.
		\end{equation}
		The assumption $L_{\mathcal{R}_1}= L_{\mathcal{R}_2}$ implies $\theta^{(1)}-\theta^{(2)}= 0$ in $W\times (0, T)$
		and $\mathcal{R}_1\theta^{(1)}= \mathcal{R}_2\theta^{(2)}$
		in $W\times (0, T)$. Hence, based on the identity
		$$\partial_t \theta^{(1)} + \mathcal{R}_1\theta^{(1)}\cdot \nabla \theta^{(1)} +(-\Delta)^\alpha \theta^{(1)}=
		\partial_t \theta^{(2)} + \mathcal{R}_2\theta^{(2)}\cdot \nabla \theta^{(2)} +(-\Delta)^\alpha \theta^{(2)}$$
		in $W\times (0, T)$, we have
		$$(-\Delta)^\alpha (\theta^{(1)}- \theta^{(2)})= 0$$
		in $W\times (0, T)$.
		By the unique continuation property, we conclude that
		$\theta^{(1)}=\theta^{(2)}$ in $\mathbb{T}^2\times (0, T)$.
		
		Now we consider the source which has the form
		$$f= \epsilon(f_1+ f_2),\qquad f_1, f_2\in C^\infty_c(W\times (0, T)).$$
		We define $w_j$ as before to be the solution of (\ref{Linfracf1f2}). We use 
		$v^{(j)}$ ($j= 1, 2$) to denote the solution of 
		\begin{equation}\label{Linfracvj}
			\left\{
			\begin{aligned}
				\partial_t v + \mathcal{R}_jw_1\cdot \nabla w_2+ \mathcal{R}_jw_2\cdot \nabla w_1+ (-\Delta)^\alpha v &=0,\quad \,\,\,(x,t)\in \mathbb{T}^2\times (0, T),\\
				v(0)&= 0,\quad \,\,\,x\in \mathbb{T}^2.\\
			\end{aligned}
			\right.
		\end{equation}  
		The second order linearization result ensures that $\theta^{(1)}=\theta^{(2)}$ in $\mathbb{T}^2\times (0, T)$
		implies $v^{(1)}= v^{(2)}$ in $\mathbb{T}^2\times (0, T)$.
		Hence, we obtain the following identity
		$$(\mathcal{R}_1- \mathcal{R}_2)w_1\cdot \nabla w_2+ (\mathcal{R}_1- \mathcal{R}_2)w_2\cdot \nabla w_1= 0$$
		in $\mathbb{T}^2\times (0, T)$ for any $f_1, f_2\in C^\infty_c(W\times (0, T))$.
		
		Let both sides of the identity act on $\varphi\in C^\infty_c(W^e)$ and integrate with respect to the time $t$. Then we have
		$$\int^T_0\int_{W^e}[w_2(\mathcal{R}_1- \mathcal{R}_2)w_1\cdot \nabla \varphi+ w_1(\mathcal{R}_1- \mathcal{R}_2)w_2\cdot \nabla \varphi]\,\mathrm{d}x\mathrm{d}t= 0,$$
		where we use $\div(\mathcal{R}_1- \mathcal{R}_2)= 0$.
		Based on the Runge approximation property, we have 
		$$\int^T_0\int_{W^e}[\tilde{\phi}_2(\mathcal{R}_1- \mathcal{R}_2)\tilde{\phi}_1\cdot \nabla \varphi+ \tilde{\phi}_1(\mathcal{R}_1- \mathcal{R}_2)\tilde{\phi}_2\cdot \nabla \varphi]\,\mathrm{d}x\mathrm{d}t= 0$$
		for any $\tilde{\phi}_1, \tilde{\phi}_2\in C^\infty_c(W^e\times (0, T))$. In particular, we have 
		$$\int_{W^e}[\phi_2(\mathcal{R}_1- \mathcal{R}_2)\phi_1\cdot \nabla \varphi+ \phi_1(\mathcal{R}_1- \mathcal{R}_2)\phi_2\cdot \nabla \varphi]\,\mathrm{d}x= 0$$
		for any $\phi_1, \phi_2\in C^\infty_c(W^e)$. Now we write 
		$\nabla^\perp:= (-\partial_2, \partial_1)$. Then by the definition of $\mathcal{R}$ (see (\ref{u1u2})), we have
		$$\int_{W^e}[(\mathcal{K}_1- \mathcal{K}_2)\phi_1\nabla^\perp\phi_2\cdot \nabla \varphi+ (\mathcal{K}_1- \mathcal{K}_2)\phi_2\nabla^\perp\phi_1\cdot \nabla \varphi]\,\mathrm{d}x= 0,$$
		or equivalently,
		$$\int_{W^e}[(\mathcal{K}_1- \mathcal{K}_2)\phi_1(-\partial_2\phi_2\partial_1\varphi+ \partial_1\phi_2\partial_2\varphi)+ (\mathcal{K}_1- \mathcal{K}_2)\phi_2(-\partial_2\phi_1\partial_1\varphi+ \partial_1\phi_1\partial_2\varphi)]\,\mathrm{d}x= 0.$$
		
		Now we consider disjoint open subsets $W_1, W_2$ s.t. $W_1\cup W_2\subset W^e$. We consider $\phi_j\in C^\infty_c(W_j)$ ($j= 1, 2$). For chosen $\phi_1, \phi_2$, we choose $\varphi\in C^\infty_c(W_1)$ s.t. $\varphi= x_1$ or $x_2$ on $\mathrm{supp}\, \phi_1$. Then the integral identity above implies
		$$\int_{W^e}(\mathcal{K}_1- \mathcal{K}_2)\phi_2(\partial_1\phi_1)\,\mathrm{d}x= 
		\int_{W^e}(\mathcal{K}_1- \mathcal{K}_2)\phi_2(\partial_2\phi_1)\,\mathrm{d}x
		=0,$$
		or equivalently,
		$$\int_{W_1}\int_{W_2}\partial_1(K_1- K_2)(x-y)\phi_2(y)\phi_1(x)\,\mathrm{d}y\mathrm{d}x$$
		$$=\int_{W_1}\int_{W_2}\partial_2(K_1- K_2)(x-y)\phi_2(y)\phi_1(x)\,\mathrm{d}y\mathrm{d}x
		=0.$$
		Since our choices of $\phi_1, \phi_2$ are arbitrary, we conclude that 
		$$\partial_1(K_1- K_2)=
		\partial_2(K_1- K_2)= 0$$
		in the set 
		$$\{x-y: x\in W_1, y\in W_2\}.$$
		Since our choices of $W_1, W_2$ are arbitrary, we conclude that 
		$$\partial_1(K_1- K_2)=
		\partial_2(K_1- K_2)= 0$$
		in the set 
		$$\{x-y: x, y\in W^e, x\neq y\},$$
		which implies $$\mathcal{R}_1g|_{W^e}= \mathcal{R}_2g|_{W^e},\qquad g\in 
		C^\infty_c(W^e).$$
	\end{proof}
	
	\section{Acknowledgments}
	L.L. would like to thank Professor Gunther Uhlmann for helpful discussions. W.W. was partially supported by the Simons Foundation TSM grant. The authors would like to thank the organizers of the CBMS Conference: Inverse Problems and Nonlinearity at Clemson University, where part of this work was completed.
	
	\section{Appendix}\label{appen}
	We will sketch the proof of Proposition \ref{p1} here.
	We refer readers to Chapter 3 in \cite{Resnick1996} for more detailed arguments in the setting of the SQG equation.
	For convenience, we will use the following notations
	$$\|\cdot\|_p:= \|\cdot\|_{L^p(\mathbb{T}^2)},\qquad
	|\cdot|_s:= \|\cdot\|_{H^s(\mathbb{T}^2)},\qquad 
	\Lambda:= (-\Delta)^\frac{1}{2}.$$
	
	\begin{proof}[Proof of Proposition \ref{p1}]
		(i)\, \textbf{Existence and regularity}: We need to consider a sequence of retarded mollifications of the nonlinear active scalar equation. 
		
		We pick a $\phi \in C^\infty_c(0, \infty), \phi \geq 0$ satisfying supp $\phi \subset[1,2]$ and $\int_{0}^{\infty} \phi(\tau) \,\mathrm{d} \tau=1$, and let
		$$
		U_{\delta}[\theta](t):= \int_{0}^{\infty} \phi(\tau) \mathcal{R} \theta(t-\delta \tau) \,\mathrm{d}\tau,
		$$
		where we set $\theta(t)= 0$ for $t<0$. Thus, $U_{\delta}[\theta](t)$ depends on the values of $\theta(t^{\prime})$ only for $t^{\prime} \in[t-2 \delta, t-\delta]$.
		Since $\mathcal{R}$ is bounded on $L^{q}(\mathbb{T}^2)$ and $H^{s}(\mathbb{T}^2)$, for all $t>0$ we have
		
		\begin{equation*}
			\sup _{[0, t]}\left\|U_{\delta}[\theta]\right\|_{q} \leqslant C \sup _{[0, t]}\|\theta\|_{q}~\text{and}~
			\int_{0}^{t}\left|U_{\delta}[\theta]\right|_{s}^{2}\,\mathrm{d}\tau \leqslant \int_{0}^{t}|\theta|_{s}^{2}\,\mathrm{d}\tau.
		\end{equation*}
		
		Now we take a sequence $\delta_{n} \rightarrow 0^+$ and consider the equation
		$$
		\partial_t \theta_{n}+ u_{n}\cdot \nabla \theta_{n}+\Lambda^{2 \alpha} \theta_{n}=f
		$$
		with initial data $\theta_{n}(0)= 0$ and $u_{n} :=U_{\delta_{n}}\left[\theta_{n}\right]$. The advantage of this mollification is that the equation is actually a linear passive scalar equation on each subinterval $\left[t_{k}, t_{k+1}\right]$ with $t_{k}=k \delta_{n}$, since $u_{n}$ is determined by the values of $\theta_{n}$ on the two previous subintervals. 
		This enables us to inductively apply the results for the linear equation to obtain the existence of $\theta_{n}$ and its $L^{q}$ bounds.
		
		To obtain the $H^{s}$-estimate, we let both sides of the equation act on $\Lambda^{2s}\theta_{n}$. We have
		$$
		\frac{1}{2} \frac{d}{d t}\left|\theta_{n}\right|_{s}^{2}+\left|\theta_{n}\right|_{s+\alpha}^{2} \leqslant\left|\int_{\mathbb{T}^{2}} \Lambda^{s} \theta_{n} \Lambda^{s}\left(u_{n}\cdot \nabla \theta_{n}\right) d x\right|+\left|\int_{\mathbb{T}^{2}} \Lambda^{s} f \Lambda^{s} \theta_{n} d x\right| .
		$$
		Based on the Sobolev embedding, the H{\"o}lder inequality and the product lemma 
		$$\|\Lambda^{s+1-\alpha}(u\theta)\|_2
		\leq C(\|u\|_q\|\Lambda^{s+1-\alpha}\theta\|_2+ \|\theta\|_q\|\Lambda^{s+1-\alpha}u\|_2)$$
		(see Lemma A.4 in \cite{Resnick1996}), we can further estimate that
		$$
		\left|\int_{\mathbb{T}^{2}} \Lambda^{s} f \Lambda^{s} \theta_{n} d x\right|
		\leqslant
		\frac{1}{8}\left|\theta_{n}\right|_{s+\alpha}^{2}+C|f|_{s-\alpha},
		$$
		$$\left|\int_{\mathbb{T}^{2}} \Lambda^{s} \theta_{n} \Lambda^{s}\left(u_{n}\cdot \nabla \theta_{n}\right) d x\right|
		\leqslant \frac{1}{4}\left|\theta_{n}\right|_{s+\alpha}^{2}+\frac{1}{8}\left|u_{n}\right|_{s+\alpha}^{2}
		+C[\left\|u_{n}\right\|_{q}^N\left|\theta_{n}\right|_{s}^{2}+\left\|\theta_{n}\right\|_{q}^N\left|u_{n}\right|_{s}^{2}],
		$$
		where $N:= \alpha/(\alpha-1/2-1/q)$. 
		Then we integrate over $[0, t]$ to obtain the
		Gr\"onwall type integral inequality
		$$|\theta_n(t)|^2_{s}+ \int^t_0 |\theta_n(\tau)|^2_{s+\alpha}\,\mathrm{d}\tau$$
		$$\leq C(\sup_{\tau\in[0, t]}\|\theta_n(\tau)\|_q)^N\int^t_0 |\theta_n(\tau)|^2_{s}\,\mathrm{d}\tau
		+ C\int^t_0 |f(\tau)|^2_{s-\alpha}\,\mathrm{d}\tau,$$
		which enables us to obtain the desired estimate for $\theta_n$.
		
		Finally, we can use a compactness argument to obtain $\theta$ and the desired estimate.
		
		(ii)\, \textbf{Uniqueness}: Suppose $\theta_{j}$ ($j=1,2$) are two solutions. Then $\tilde{\theta}:= \theta_{1}-\theta_{2}$ satisfies
		$$
		\partial_t \tilde{\theta}+\tilde{u}\cdot \nabla \theta_{1}+ u_{2} \cdot\nabla \tilde{\theta}+\Lambda^{2 \alpha} \tilde{\theta}=0,
		$$
		where $\tilde{u}:=\mathcal{R} \tilde{\theta}$. 
		Let both sides of the equation act on 
		$\tilde{\psi}:= -\mathcal{K} \tilde{\theta}$.  Note that
		$$
		\int_{\mathbb{T}^{2}} \tilde{\psi}(\tilde{u}\cdot \nabla \theta_{1})\,\mathrm{d}x=0 
		$$
		since $\tilde{\psi}\tilde{u}$ can be written as a divergence-free term.
		Then by the Sobolev embedding and the Hölder inequality, we have
		\begin{equation*}
			\begin{split}
				\frac{1}{2} \frac{d}{d t}|\widetilde{\psi}|_{\frac{1}{2}}^{2}+|\tilde{\psi}|_{\frac{1}{2}+\alpha}^{2} \leqslant\left|\int_{\mathbb{T}^{2}} \tilde{\theta} (u_{2} \cdot\nabla \tilde{\psi}) \,\mathrm{d}x\right|
				&\leqslant
				\left\|u_{2}\right\|_{q}\|\tilde{\theta}\|_{p}\|\nabla \tilde{\psi}\|_{p} 
				\\&\leqslant C\left\|u_{2}\right\|_{q}|\widetilde{\theta}|_{\frac{1}{q}}|\nabla \tilde{\psi}|_{\frac{1}{q}} \leqslant C\left\|\theta_{2}\right\|_{q}|\tilde{\psi}|_{1+\frac{1}{q}}^{2}
				\leqslant \frac{1}{2}|\widetilde{\psi}|_{\frac{1}{2}+\alpha}^{2}+C\left\|\theta_{2}\right\|_{q}^{N}|\tilde{\psi}|_{\frac{1}{2}}^{2},
			\end{split}
		\end{equation*}
		where $p$ is given by $1/q+ 2/p= 1$.
		
		Hence, we obtain the Gr\"onwall type inequality
		$$
		\frac{d}{d t}|\tilde{\psi}|_{\frac{1}{2}}^{2}+|\tilde{\psi}|_{\frac{1}{2}+\alpha}^{2} \leqslant C\left\|\theta_{2}\right\|_{q}^{N}|\tilde{\psi}|_{\frac{1}{2}}^{2},
		$$
		which implies $\tilde{\psi}= 0$, and thus 
		$\tilde{\theta}= 0$.
	\end{proof}

	\bigskip\noindent 
	Li Li, Yau Mathematical Sciences Center, Tsinghua University, Beijing, China;\\
	e-mail: \url{lili19940301@mail.tsinghua.edu.cn}
	
	\bigskip\noindent 
	Weinan Wang, Department of Mathematics, University of Oklahoma, Norman, OK, USA;\\
	e-mail: \url{ww@ou.edu}

\begin{thebibliography}{99}
		
		\bibitem{baers2024instability}
		Hendrik Baers, Giovanni Covi, and Angkana R{\"u}land.
		\newblock On instability properties of the fractional {Calder\'on} problem.
		\newblock {\em arXiv preprint arXiv:2405.08381}, 2024.
		
		\bibitem{SQG}
		Dongho Chae, Peter Constantin, Diego C\'{o}rdoba, Francisco Gancedo, and
		Jiahong Wu.
		\newblock Generalized surface quasi-geostrophic equations with singular
		velocities.
		\newblock {\em Comm. Pure Appl. Math.}, 65(8):1037--1066, 2012.
		
		\bibitem{covi2022higher}
		Giovanni Covi, Keijo M{\"o}nkk{\"o}nen, Jesse Railo, and Gunther Uhlmann.
		\newblock The higher order fractional {Calder\'on} problem for linear local
		operators: Uniqueness.
		\newblock {\em Advances in Mathematics}, 399:108246, 2022.
		
		\bibitem{evans1998partial}
		Lawrence~C Evans.
		\newblock {\em Partial differential equations}, volume~19.
		\newblock American mathematical society, 1998.
		
		\bibitem{feizmohammadi2024calder}
		Ali Feizmohammadi, Katya Krupchyk, and Gunther Uhlmann.
		\newblock {Calder\'on} problem for fractional {Schr\"odinger} operators on
		closed {Riemannian} manifolds.
		\newblock {\em arXiv preprint arXiv:2407.16866}, 2024.
		
		\bibitem{feizmohammadi2020inverse}
		Ali Feizmohammadi and Lauri Oksanen.
		\newblock An inverse problem for a semi-linear elliptic equation in riemannian
		geometries.
		\newblock {\em Journal of Differential Equations}, 269(6):4683--4719, 2020.
		
		\bibitem{FSW}
		Susan Friedlander, Anthony Suen, and Fei Wang.
		\newblock Ill/well-posedness of non-diffusive active scalar equations with
		physical applications.
		\newblock {\em Journal of Differential Equations}, 411:880--902, 2024.
		
		\bibitem{ghosh2020uniqueness}
		Tuhin Ghosh, Angkana R{\"u}land, Mikko Salo, and Gunther Uhlmann.
		\newblock Uniqueness and reconstruction for the fractional {Calder\'on} problem
		with a single measurement.
		\newblock {\em Journal of Functional Analysis}, page 108505, 2020.
		
		\bibitem{ghosh2020calderon}
		Tuhin Ghosh, Mikko Salo, and Gunther Uhlmann.
		\newblock The {Calder\'on} problem for the fractional {Schr\"odinger} equation.
		\newblock {\em Analysis \& PDE}, 13(2):455--475, 2020.
		
		\bibitem{krupchyk2020remark}
		Katya Krupchyk and Gunther Uhlmann.
		\newblock A remark on partial data inverse problems for semilinear elliptic
		equations.
		\newblock {\em Proceedings of the American Mathematical Society},
		148(2):681--685, 2020.
		
		\bibitem{BE}
		Igor Kukavica and Weinan Wang.
		\newblock Global {S}obolev persistence for the fractional {B}oussinesq
		equations with zero diffusivity.
		\newblock {\em Pure Appl. Funct. Anal.}, 5(1):27--45, 2020.
		
		\bibitem{kurylev2018inverse}
		Yaroslav Kurylev, Matti Lassas, and Gunther Uhlmann.
		\newblock Inverse problems for lorentzian manifolds and non-linear hyperbolic
		equations.
		\newblock {\em Inventiones mathematicae}, 212:781--857, 2018.
		
		\bibitem{lai2022inverse}
		Ru-Yu Lai and Yi-Hsuan Lin.
		\newblock Inverse problems for fractional semilinear elliptic equations.
		\newblock {\em Nonlinear Analysis}, 216:112699, 2022.
		
		\bibitem{NSE}
		Ru-Yu Lai, Gunther Uhlmann, and Jenn-Nan Wang.
		\newblock Inverse boundary value problem for the {S}tokes and the
		{N}avier-{S}tokes equations in the plane.
		\newblock {\em Arch. Ration. Mech. Anal.}, 215(3):811--829, 2015.
		
		\bibitem{lai2023inverse}
		Ru-Yu Lai and Ting Zhou.
		\newblock An inverse problem for the non-linear fractional magnetic
		{Schr\"odinger} equation.
		\newblock {\em Journal of Differential Equations}, 343:64--89, 2023.
		
		\bibitem{li2023elas}
		Li~Li.
		\newblock On inverse problems arising in fractional elasticity.
		\newblock {\em Journal of Spectral Theory}, 12(4):1383--1404, 2023.
		
		\bibitem{li2023inverse}
		Li~Li and Yang Zhang.
		\newblock An inverse problem for the fractionally damped wave equation.
		\newblock {\em arXiv preprint arXiv:2307.16065}, 2023.
		
		\bibitem{Resnick1996}
		Serge~G. Resnick.
		\newblock Dynamical problems in non-linear advective partial differential
		equations.
		\newblock {\em PhD Thesis}, 1996.
		
		\bibitem{ruland2015unique}
		Angkana R{\"u}land.
		\newblock Unique continuation for fractional {Schr\"odinger} equations with
		rough potentials.
		\newblock {\em Communications in Partial Differential Equations},
		40(1):77--114, 2015.
		
		\bibitem{ruland2020quantitative}
		Angkana R{\"u}land and Mikko Salo.
		\newblock Quantitative approximation properties for the fractional heat
		equation.
		\newblock {\em Mathematical Control \& Related Fields}, 10(1):1--26, 2020.
		
		\bibitem{stein1970singular}
		Elias~M Stein.
		\newblock {\em Singular integrals and differentiability properties of
			functions}.
		\newblock Princeton university press, 1970.
		
		\bibitem{SteinIntro}
		Elias~M. Stein and Guido Weiss.
		\newblock {\em Introduction to Fourier analysis on Euclidean spaces. Vol. 1}.
		\newblock Princeton university press, 1971.
		
		\bibitem{uhlmann2021inverse}
		Gunther Uhlmann and Jian Zhai.
		\newblock On an inverse boundary value problem for a nonlinear elastic wave
		equation.
		\newblock {\em Journal de Math{\'e}matiques Pures et Appliqu{\'e}es},
		153:114--136, 2021.
		
	\end{thebibliography}
\end{document}